\journal{Applied Mathematics and Computation}
\begin{document}

\begin{frontmatter}

\title{Effects of interconnections among corruption, institutional punishment, and economic factors for the evolution of cooperation}

\author[label1,label2]{Linjie Liu}

\author[label2]{Xiaojie Chen\corref{cor}}
\cortext[cor]{Corresponding author} \ead{xiaojiechen@uestc.edu.cn}

\address[label1]{College of Science, Northwest A \& F University, Yangling 712100, China}
\address[label2]{ School of Mathematical Sciences, University of Electronic Science and Technology of China, Chengdu 611731, China}

\begin{abstract}
The view that altruistic punishment plays an important role in supporting public cooperation among human beings and other species has been widely accepted by the public. However, the positive role of altruistic punishment in enhancing cooperation will be undermined if corruption is considered. Recently, behavioral experiments have confirmed this finding and further investigated the effects of the leader's punitive power and the economic potential. Nevertheless, there are relatively few studies focusing on how these factors affect the evolution of cooperation from a theoretical perspective. Here, we combine institutional punishment public goods games with bribery games to investigate the effects of the above factors on the evolution of cooperation. Theoretical and numerical results reveal that the existence of corruption will reduce the level of cooperation when cooperators are more inclined to provide bribes. In addition, we demonstrate that stronger leader and richer economic potential are both important to enhance cooperation. In particular, when defectors are more inclined to provide bribes, stronger leaders can sustain the contributions of public goods from cooperators if the economic potential is weak.
\end{abstract}

\begin{keyword}

cooperation \sep evolutionary game theory  \sep institutional punishment \sep corruption

\end{keyword}

\end{frontmatter}

\newtheorem{thm}{\bf Theorem}[section]
\newtheorem{remark}{Remark}[section]

\section{Introduction}

Cooperation is the basis for the survival and reproduction of organisms, and its phenomenon is very common in the real society \citep{wangbmb16,Perc_PR_17,Perc_bio_10,wang2020epl}. However, how did cooperative behavior evolve remains a major puzzle in scientific community \citep{Kennedy_05}. Because individuals who choose cooperative behavior need to bear the cost by themselves but bring benefits to others, which makes them at a disadvantage in the process of natural selection \citep{Axelrod1984,han2015,Hofbauer1990,Tanimoto2017,WANG18AMC,Su2019,zhangis_21,Szolnoki2014interface,fu2010jtb,Szolnoki2022csf}. Previous researches have illuminated a number of mechanisms including reputation to explain the evolution of cooperation \citep{nowak2006,Kun2013,Tanimoto2018,Szolnoki2017pre,he2019,Szolnoki2017,Szolnoki2012,Szolnoki2012pre,Fupre2008,Szolnoki20amc,han18sr,Duong2021PRSA,xia2014amc,yang2017csf,yang2019pa,quan2021}. Recent studies on strong reciprocity theory reveal that some individuals will not hesitate to pay costs to punish those who do not cooperate, even if these costs cannot be compensated \citep{Vasconcelos2015,Okada2015,zhang2017,quan2019,Han2021,wang2019,chen2015pre,perc2012njp,perc2012sr,yang2020epl,zhang2021amc}. The threat of punishment can limit the prevalence of uncooperative individuals and thus support cooperation \cite{sigmund2010nature,chen2018pcb,quan2020epl}.

Although punishment plays a pivotal role in maintaining cooperation, it cannot be ignored that punishment, especially institutional punishment, is vulnerable to corruption. A number of studies support the finding that curruption might have a destructive effect on the role of punishment in promoting cooperation \citep{Abdallah_interface_14,lee_jtb_15,Lee_PNAS_19,Liu_MMMAS_19,Liu_IJBC_21}. Concretely, Abdallah \emph{et al.} \citep{Abdallah_interface_14} studied how corruption affects the co-evolution of cooperation and punishment in public goods games (PGG) and found that the effectiveness of institutional punishment on promoting cooperation will be undermined when bribery is an option. Subsequently, some studies based on donation game \citep{Lee_PNAS_19} and PGG \citep{Liu_MMMAS_19} revealed that cyclic behavior can be found when bribe between enforcers and free-riders happens.

Previous theoretical studies have investigated how corruption affects the effect of punishment on cooperation from different perspectives \citep{Abdallah_interface_14,lee_jtb_15,Lee_PNAS_19,huang_jtb_18,Liu_MMMAS_19,Liu_IJBC_21}. However, the impact of some important factors correlated with levels of corruption such as institutional and economic factors on the results has not been explored. Recent experimental research \citep{Muthukrishna_nhb_17} constructed the institutional punishment public goods game (IPGG) and bribery game (BG) where one individual is randomly selected from the game group to act as leader who could use taxes collected from all individuals for punishment. In addition, each individual can choose whether to bribe the leader or not, and the leader can choose to punish, accept bribes, or do nothing. The experimental results reveal that corruption, the punishment multiplier (the leader's punitive power), and the pool multiplier (the expression of the economic potential) play important roles in the evolution of cooperation. However, to our knowledge, thus far few theoretical works have revealed the causal interconnections among corruption, institutional punishment, and economic factors for the evolution of cooperation. Accordingly, it is still unclear the quantitative effects of these elements on the evolution of cooperation from a theoretical perspective.

In view of the above statements, in this work we construct a game-theoretical model by combing BG with IPGG to study the evolution of cooperation. We focus on the impacts of the pool multiplier and the punishment multiplier on the evolutionary results in an infinite populations. Through theoretical analysis, we reveal that corruption will decrease the level of cooperation when cooperators are more inclined to bribe, and stronger leaders and richer economic potential can both enhance the level of contributions. Once when defectors are more willing to provide bribe, a stronger leader can stimulate individual's willingness to cooperate under poor economic potential, while a richer economic environment requires a weaker leader. We further numerically verify the above theoretical results.

\section{Model}
\subsection{IPGG}
We consider that $N$ individuals are randomly sampled from an infinite well-mixed population to play the PGG. According to the previous experimental setup \citep{Muthukrishna_nhb_17}, we consider that every individual has an initial fund $b$ and decides whether or not to contribute $c$ to the public goods pool. And the total contribution of the public goods pool is multiplied by the pool multiplier $F$ where $1<F<N$ and then distributed equally to all group members regardless of contribution. Thus one cooperator ($C$) can obtain lower payoff than a defector ($D$) in the same group. In order to solve the above cooperation problem, we consider institutional punishment under the framework of PGG. Concretely, each individual needs to pay a fixed tax $\tau$ to support such a punishment institution before contributing to the PGG, and then one individual is randomly selected from the game group as a leader. With probability $\beta$, the leader chooses to punish individuals in the PGG by using taxes extracted from all individuals, with probability $1-\beta$ he/she does nothing. Here, we assume that one punishing leader will allocate the $\alpha$ ratio of total taxes to punish cooperators, while the remainder $(1-\alpha)N\tau$ is used for equally punishing the defectors. Here the punishment multiplier is set as $r_{p}$, which describes the intensity of the selected leader to execute the punishment. Besides, the pool multiplier $F$ is used to characterize the economic potential. In our model, the bigger the $r_{p}$ is, the stronger the punitive power of the leader is; The greater the $F$, the richer the economic potential. Thus the payoffs of $C$ and $D$ individuals in the IPGG can be respectively written as
\begin{eqnarray}\label{eq1}
\pi_{C}&=&b+\frac{Fc(N_{C}+1)}{N}-c-\tau-(1-\frac{1}{N})[\frac{N_{C}}{N-1}\frac{\beta\alpha N\tau r_{p}}{N_{C}}\nonumber\\
&+&\frac{N_{D}}{N-1}\frac{\beta\alpha N\tau r_{p}}{N_{C}+1}]
\end{eqnarray}
and
\begin{eqnarray}\label{eq2}
\pi_{D}&=&b+\frac{FcN_{C}}{N}-\tau-(1-\frac{1}{N})[\frac{N_{C}}{N-1}\frac{\beta(1-\alpha)N\tau r_{p}}{N_{D}+1}\nonumber\\
&+&\frac{N_{D}}{N-1}\frac{\beta(1-\alpha)N\tau r_{p}}{N_{D}}],
\end{eqnarray}
where $1-\frac{1}{N}$ denotes the probability that the focus cooperator or defector is not selected as the leader, $N_{C}$ and $N_{D}$ respectively refer to the number of $C$ and $D$ individuals in the group. Besides, $\frac{N_{C}}{N-1}$ denotes the probability that one cooperator is selected as leader among the remaining $N-1$ individuals, and $\frac{\beta\alpha N\tau r_{p}}{N_{C}}$ denotes the expected fine that the focus cooperator needs to bear when the leader is a cooperator. $\frac{N_{D}}{N-1}\frac{\beta\alpha N\tau r_{p}}{N_{C}+1}$ represents the expected fine that the focus cooperator receives from the defective leader. Similarly, $\frac{N_{C}}{N-1}\frac{\beta(1-\alpha)N\tau r_{p}}{N_{D}+1}+\frac{N_{D}}{N-1}\frac{\beta(1-\alpha)N\tau r_{p}}{N_{D}}$ represents the expected fine that the focus defector needs to bear.

\subsection{BG}

Then we consider bribery under the framework of IPGG. In this case, all $N-1$ individuals and one leader respectively have one additional choice. Concretely, the other $N-1$ individuals can decide to use their endowment to contribute to the common pool, retain for themselves, and offer bribes to the selected leader. In turn, the leader can choose to punish other individuals, do nothing, or accept the bribe. Here we set the probability that one cooperator (non-leader) chooses to offer bribe $h$ to the leader as $p$ and the probability that one defector chooses to offer bribe as $q$. Furthermore, the probability that the leader chooses to punish is set as $\beta$, the probability of accepting bribe is $\gamma$, and hence the probability of doing nothing is $1-\beta-\gamma$. Then the payoffs of $C$ and $D$ individuals obtained from the BG can be respectively rewritten as
\begin{eqnarray}\label{eq3}
\pi_{C}&=&b+\frac{Fc(N_{C}+1)}{N}-c-\tau+\frac{1}{N}(p\gamma hN_{C}+qhN_{D}\gamma)\nonumber\\
&-&(1-\frac{1}{N})[ph\gamma+\frac{N_{C}}{N-1}\frac{\beta\alpha N\tau r_{p}}{N_{C}}+\frac{N_{D}}{N-1}\frac{\beta\alpha N\tau r_{p}}{N_{C}+1}]
\end{eqnarray}
and
\begin{eqnarray}\label{eq4}
\pi_{D}&=&b+\frac{FcN_{C}}{N}-\tau+\frac{1}{N}(p\gamma hN_{C}+qhN_{D}\gamma)\nonumber\\
&-&(1-\frac{1}{N})[qh\gamma+\frac{N_{C}}{N-1}\frac{\beta(1-\alpha)N\tau r_{p}}{N_{D}+1}+\frac{N_{D}}{N-1}\frac{\beta(1-\alpha)N\tau r_{p}}{N_{D}}],
\end{eqnarray}
where $\frac{1}{N}$ denotes the probability that the focus individual is selected as leader, $p\gamma hN_{C}$ denotes the expected bribe amount that the leader receives from cooperators, and $qhN_{D}\gamma$ denotes the expected bribe amount that the leader receives from defectors. Besides, $ph\gamma$ and $qh\gamma$ respectively denote the expected bribe amounts provided by cooperator and defector for the leader.

We use social learning to describe the strategy selection process, that is, individuals tend to imitate other individuals' strategies which can produce higher payoffs \citep{Hofbauer_98}. In the following, we shall explore the replicator dynamics of
cooperation and defection in an infinite population.

\section{Results}

In an infinite population, a general approach to investigate the evolutionary
dynamics of the fraction $x$ of $C$ individuals (and $1-x$ of $D$ individuals) in an infinite population
is the gradient of selection associated with the well-known replicator equation \citep{Schuster_83,Hofbauer_98}, given as
\begin{eqnarray}\label{re}
\dot{x}=x(1-x)(f_{C}-f_{D}),
\end{eqnarray}
which describes the change in the frequency of cooperation in the population. Here $f_{C}$ and $f_{D}$ are respectively
the average payoffs of $C$ and $D$ individuals. According to the
system equation (\ref{re}), $C (D)$ individuals will increase in the population whenever $\dot{x}> 0
(\dot{x}<0)$. Since the population is well-mixed, where each individual
can interact with each other. Then we can written the average payoffs of $C$ and $D$ individuals as
\begin{eqnarray}\label{equ6}
   f_{C}&=&\sum_{N_{C}=0}^{N-1}\binom{N-1}{N_{C}}x^{N_{C}}(1-x)^{N-N_{C}-1}\pi_{C}
\end{eqnarray}
and
\begin{eqnarray}\label{equ7}
   f_{D}&=&\sum_{N_{C}=0}^{N-1}\binom{N-1}{N_{C}}x^{N_{C}}(1-x)^{N-N_{C}-1}\pi_{D}.
\end{eqnarray}

\begin{figure}[H]
\centering
\includegraphics[width=1\linewidth]{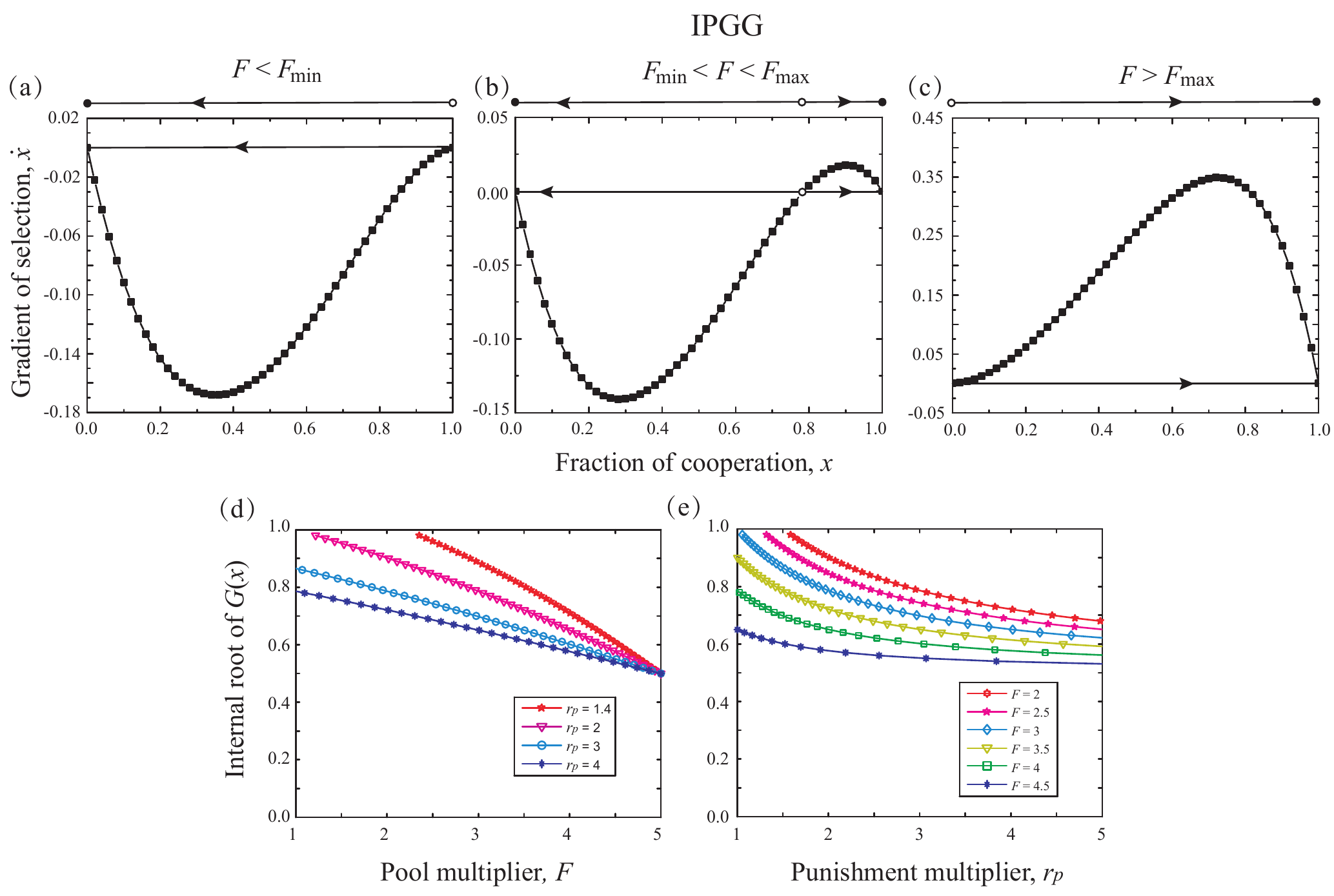}
\caption{Evolutionary dynamics of cooperation in IPGG. Panels (a), (b), and (c) show the gradient of selection changing with the fraction of cooperators for different values of pool multiplier $F$. Panel (d) shows the internal roots of $G(x)$ as a function of $F$ for different values of $r_{p}$. Panel (e) shows the internal roots of $G(x)$ as a function of $r_{p}$ for different values of $F$. Open circles represent unstable equilibrium points, and filled circles denote stable equilibrium points. The arrow pointing to the right indicates that cooperation is favored over defection. Parameter values: $b=12, N=5, c=1, \tau=1, F=2, \alpha=0.5, \beta=0.2,$ and $r_{p}=1.4$ in panel (a); $N=5, c=1, b=12, \tau=1, \alpha=0.5, F=3, r_{p}=2,$ and $\beta=0.2$ in panel (b); $N=5, F=4.7, c=1, b=12, \tau=1, \alpha=0.15, \beta=0.2,$ and $r_{p}=4$ in panel (c); $N=5, c=1, b=12, \tau=1, \alpha=0.5,$ and $\beta=0.2$ in panels (d) and (e).}
\label{fig1}
\end{figure}

For the convenience of theoretical analysis, we set that $G (x) = x(1-x)Q(x)$ where $Q (x) = f_{C}-f_{D}$.

\begin{remark}
The system equation (\ref{re}) has two boundary equilibrium points, namely, $x = 1$ (full cooperation) and $x = 0$ (full defection).
\end{remark}

In the following, we will analyze the existence and stability of the interior equilibrium points of equation (\ref{re}) in the models of IPGG and BG, respectively.

\subsection{Evolutionary dynamics in IPGG}

When bribery is not considered, by substituting equation (\ref{eq1}) into equation (\ref{equ6}), we can calculate the average payoff of cooperators as
\begin{eqnarray*}
   f_{C}&=&b+\frac{Fc[(N-1)x+1]}{N}-c-\tau-\beta\alpha\tau r_{p}-\beta\alpha\tau r_{p}\frac{(1-x)-(1-x)^{N}}{x}.
\end{eqnarray*}
Similarly, by substituting equation (\ref{eq2}) into equation (\ref{equ7}), we can calculate the average payoff of defectors as
\begin{eqnarray*}
   f_{D}&=&b+\frac{Fc(N-1)x}{N}-\tau-\beta(1-\alpha)\tau r_{p}-\beta(1-\alpha)\tau r_{p}\frac{x(1-x^{N-1})}{1-x}.
\end{eqnarray*}

\begin{thm}
Let $F_{\min}=\frac{[c-\beta\tau r_{p}+2\alpha\beta\tau r_{p}-\beta(1-\alpha)\tau r_{p}(N-1)]N}{c}$ and $F_{\max}=\frac{[c-\beta\tau r_{p}+\beta\alpha\tau r_{p}(N+1)]N}{c}$. Thus we have the following three conclusions:\\
(1) If $F< F_{\min},$ the system equation (\ref{re}) does not have interior equilibrium point for $x\in (0, 1)$. Then the equilibrium point $x=0$ is stable, while the equilibrium point $x=1$ is unstable.\\
(2) If $F_{\min}<F<F_{\max},$ the system equation (\ref{re}) just has one interior equilibrium point $x^{*}$, which is unstable. The two boundary equilibrium points $x=1$ and $x=0$ are stable.\\
(3) If $F> F_{\max},$ the system equation (\ref{re}) has no interior equilibrium point in $(0, 1)$. Only $x=1$ is a stable equilibrium point, while the equilibrium point $x=0$ is unstable .
\end{thm}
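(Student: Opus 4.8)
The plan is to reduce the entire statement to the behaviour of the single function $Q(x)=f_C-f_D$ on the open interval $(0,1)$. Writing $G(x)=x(1-x)Q(x)$ as in the paper, the interior equilibria of \eqref{re} are exactly the zeros of $Q$ in $(0,1)$, and linearizing \eqref{re} at the boundary gives a one-dimensional Jacobian equal to $Q(0)$ at $x=0$ and to $-Q(1)$ at $x=1$; hence $x=0$ is asymptotically stable iff $Q(0)<0$ and $x=1$ is asymptotically stable iff $Q(1)>0$. So it suffices to determine (a) the number of zeros of $Q$ in $(0,1)$ and (b) the signs of $Q(0)$ and $Q(1)$.

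For (a) I would first put $Q$ in closed form. Subtracting the expressions for $f_C$ and $f_D$ obtained above, the constants $b,-\tau$ and the terms linear in $x$ cancel, leaving
\[
Q(x)=\frac{Fc}{N}-c-\beta\alpha\tau r_p+\beta(1-\alpha)\tau r_p-\beta\alpha\tau r_p\,\frac{(1-x)-(1-x)^N}{x}+\beta(1-\alpha)\tau r_p\,\frac{x-x^N}{1-x}.
\]
The key observation is that $\dfrac{x-x^N}{1-x}=g(x)$ where $g(x):=x+x^2+\cdots+x^{N-1}$, and the substitution $x\mapsto 1-x$ turns the other fraction into $g(1-x)$, so
\[
Q(x)=A+\beta(1-\alpha)\tau r_p\,g(x)-\beta\alpha\tau r_p\,g(1-x),\qquad A:=\frac{Fc}{N}-c-\beta\alpha\tau r_p+\beta(1-\alpha)\tau r_p,
\]
which extends continuously to $[0,1]$ since $g$ is a polynomial. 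Now $g(0)=0$, $g(1)=N-1$, and $g'(x)=1+2x+\cdots+(N-1)x^{N-2}\ge 1$ on $[0,1]$, so $g$ is strictly increasing; consequently $Q'(x)=\beta(1-\alpha)\tau r_p\,g'(x)+\beta\alpha\tau r_p\,g'(1-x)>0$ on $(0,1)$ under the natural parameter constraints ($\beta,\tau,r_p>0$, $0\le\alpha\le1$, $N\ge2$). Thus $Q$ is strictly increasing, and therefore has at most one zero in $(0,1)$.

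For (b), substituting $g(0)=0$ and $g(1)=N-1$ into the closed form and simplifying should yield exactly
\[
Q(0)=\frac{c}{N}\bigl(F-F_{\max}\bigr),\qquad Q(1)=\frac{c}{N}\bigl(F-F_{\min}\bigr),
\]
which is precisely the motivation for the definitions of $F_{\min}$ and $F_{\max}$; moreover $F_{\max}-F_{\min}=\tfrac{N}{c}\beta\tau r_p(N-1)>0$, so $F_{\min}<F_{\max}$. The three conclusions now follow by combining the monotonicity of $Q$ with these endpoint values and the boundary criterion above. If $F<F_{\min}$, then $Q(1)<0$, so $Q<0$ on all of $(0,1)$; there is no interior equilibrium, $x=0$ is stable, $x=1$ unstable. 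If $F_{\min}<F<F_{\max}$, then $Q(0)<0<Q(1)$, so by the intermediate value theorem together with strict monotonicity there is a unique interior zero $x^*$, with $Q<0$ on $(0,x^*)$ and $Q>0$ on $(x^*,1)$; hence $\dot x<0$ to the left of $x^*$ and $\dot x>0$ to its right, so $x^*$ is unstable while $x=0$ and $x=1$ are both stable. If $F>F_{\max}$, then $Q(0)>0$, so $Q>0$ on all of $(0,1)$; there is no interior equilibrium, $x=1$ is stable, $x=0$ unstable.

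The only genuinely load-bearing step is the monotonicity of $Q$ in part (a): once the two punishment-correction terms are recognized as $g(x)$ and $g(1-x)$ with $g$ increasing, the rest is bookkeeping. Accordingly I expect the main obstacle to be purely computational — carrying out the cancellations that produce the closed form for $Q$ without error, and then matching $Q(0)$ and $Q(1)$ to the exact expressions that define $F_{\min}$ and $F_{\max}$ (which is also what certifies $F_{\min}<F_{\max}$).
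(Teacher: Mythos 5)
Your proposal is correct and follows essentially the same route as the paper: compute $Q=f_C-f_D$, recognize the two correction terms as the geometric sums $g(x)=\sum_{k=0}^{N-2}x^{k+1}$ and $g(1-x)$ so that $Q$ is strictly increasing, and then read off the three cases from the signs of $Q(0)=\tfrac{c}{N}(F-F_{\max})$ and $Q(1)=\tfrac{c}{N}(F-F_{\min})$ together with the boundary Jacobians $G'(0)=Q(0)$ and $G'(1)=-Q(1)$. The endpoint identities and $F_{\max}-F_{\min}=\tfrac{N}{c}\beta\tau r_p(N-1)>0$ that you deferred as bookkeeping do check out against the paper's expressions.
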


\begin{proof}
Considering that the difference between $f_{C}$ and $f_{D}$ determines the appearance of interior equilibrium points of replicator equation, we can theoretically analyze the existence of the interior equilibrium point. The difference between $f_{C}$ and $f_{D}$ can be calculated as
\begin{eqnarray}\label{eq8}
  Q(x) &=&f_{C}-f_{D}\nonumber\\
       &=&\frac{Fc}{N}-c-2\beta\alpha\tau r_{p}+\beta\tau r_{p}-\beta\alpha\tau r_{p}\frac{1-x}{x}[1-(1-x)^{N-1}]\nonumber\\
&+&\frac{r_{p}\beta(1-\alpha)\tau}{1-x}x(1-x^{N-1}).
\end{eqnarray}
Considering that $1-x^{N-1}=(1-x)(1+x+\cdots+x^{N-2})$, we can simplify equation (\ref{eq8}) to
\begin{eqnarray}\label{eq9}
  Q(x) &=&\frac{Fc}{N}-c-2\beta\alpha\tau r_{p}+\beta\tau r_{p}-\beta\alpha\tau r_{p}\sum_{k=0}^{N-2}(1-x)^{k+1}\nonumber\\
&+&r_{p}\beta(1-\alpha)\tau\sum_{k=0}^{N-2}x^{k+1}.
\end{eqnarray}
Notice that
\begin{eqnarray}\label{eq10}
  \frac{\partial Q(x)}{\partial x} &=&\beta\alpha\tau r_{p}\sum_{k=0}^{N-2}(k+1)(1-x)^{k}+r_{p}\beta(1-\alpha)\tau\sum_{k=0}^{N-2}(k+1)x^{k}.
\end{eqnarray}
We can judge that $Q(x)$ is an increasing function since $\frac{\partial Q(x)}{\partial x}>0$ for $x\in (0,1)$ when $\beta\tau r_{p}\neq 0$. In addition, we know that
\begin{eqnarray}\label{eq11}
Q(0)&=&\frac{Fc}{N}-c+\beta\tau r_{p}-\beta\alpha\tau r_{p}(N+1),\\
Q(1)&=&\frac{Fc}{N}-c+\beta\tau r_{p}-2\beta\alpha\tau r_{p}+\beta(1-\alpha)\tau r_{p}(N-1).
\end{eqnarray}
Then we can get the following conclusions:\\
(1) When $Q(1)<0$, that is, $F< F_{\min}$, the system equation (\ref{re}) has no interior equilibrium point in $(0, 1)$. Since $\frac{\partial G(x)}{\partial x}|_{x=1}=-Q(1)>0$ and $\frac{\partial G(x)}{\partial x}|_{x=0}=Q(0)<0$, we know that the equilibrium point $x=0$ is stable, while another boundary equilibrium point $x=1$ is unstable.\\
(2) When $Q(0)<0<Q(1)$, that is, $F_{\min}< F< F_{\max}$, the system equation (\ref{re}) has an unstable interior equilibrium point in $(0, 1)$. Since $\frac{\partial G(x)}{\partial x}|_{x=1}<0$ and $\frac{\partial G(x)}{\partial x}|_{x=0}<0$, thus the boundary equilibrium points $x=1$ and $x=0$ are both stable.\\
(3) When $Q(0)>0$, that is, $F> F_{\max}$, the system equation (\ref{re}) does not have interior equilibrium point for all $x\in (0, 1)$. Since $\frac{\partial G(x)}{\partial x}|_{x=1}<0$ and $\frac{\partial G(x)}{\partial x}|_{x=0}>0$, thus the equilibrium point $x=0$ is unstable, while another boundary equilibrium point $x=1$ is stable.
\end{proof}

In order to verify the above theoretical analysis, we provide numerical calculations by presenting three typical dynamical outcomes for the gradient of selection $\dot{x}$ changing with the proportion of cooperators, as shown in Fig.~\ref{fig1}. We find that when parameter values satisfy $F<F_{\min}$, the values of the gradient of selection $\dot{x}$ are negative, and thus cooperators will not survive in the population (Fig.~\ref{fig1} (a)). When $F_{\min}< F< F_{\max}$, as shown in Fig.~\ref{fig1} (b), one unstable interior equilibrium point can emerge, and thus the system can be divided into two basins of attraction. Concretely, the system evolves towards full cooperation ($x=1$) or full defection ($x=0$), depending on initial conditions. The existing two boundary equilibrium points, namely, $x = 0$ and $x = 1$ are both stable. When $F> F_{\max}$, the gradient of selection is always positive, and $x=1$ is the only steady state of the system (Fig.~\ref{fig1} (c)). In this case, cooperation can emerge and be maintained in the population. To better understand the effects of the pool multiplier $F$ and punishment multiplier $r_{p}$ on evolutionary dynamics, in Fig.~\ref{fig1}(d) and (e) we respectively show how the value of internal root changes with $F$ and $r_{p}$. We find that the value of the existing internal root monotonically decreases with increasing the pool multiplier $F$ (Fig.~\ref{fig1} (d)) and  punishment multiplier $r_{p}$ (Fig.~\ref{fig1} (e)). This means that the advantage of cooperation expands with the increase of any one of these two parameters ($F$ and $r_{p}$).

\subsection{Evolutionary dynamics in BG}
When bribery is possible, by substituting equation (\ref{eq3}) into equation (\ref{equ6}), we can calculate the average payoff of cooperators as
\begin{eqnarray*}
   f_{C}&=&b+\frac{Fc[(N-1)x+1]}{N}-c-\tau-\beta\alpha\tau r_{p}-\beta\alpha\tau r_{p}\frac{(1-x)-(1-x)^{N}}{x}\nonumber\\
&-&\frac{N-1}{N}p\gamma h+\frac{1}{N}[p\gamma h(N-1)x+q\gamma h (N-1)(1-x)].
\end{eqnarray*}
By substituting equation (\ref{eq4}) into equation (\ref{equ7}), we can calculate the average payoff of defectors as
\begin{eqnarray*}
   f_{D}&=&b+\frac{Fc[(N-1)x]}{N}-\tau-\beta(1-\alpha)\tau r_{p}-\beta(1-\alpha)\tau r_{p}\frac{x(1-x^{N-1})}{1-x}\\
&-&\frac{N-1}{N}q\gamma h+\frac{1}{N}[p\gamma h(N-1)x+q\gamma h (N-1)(1-x)].
\end{eqnarray*}

\begin{figure}[H]
\centering
\includegraphics[width=1\linewidth]{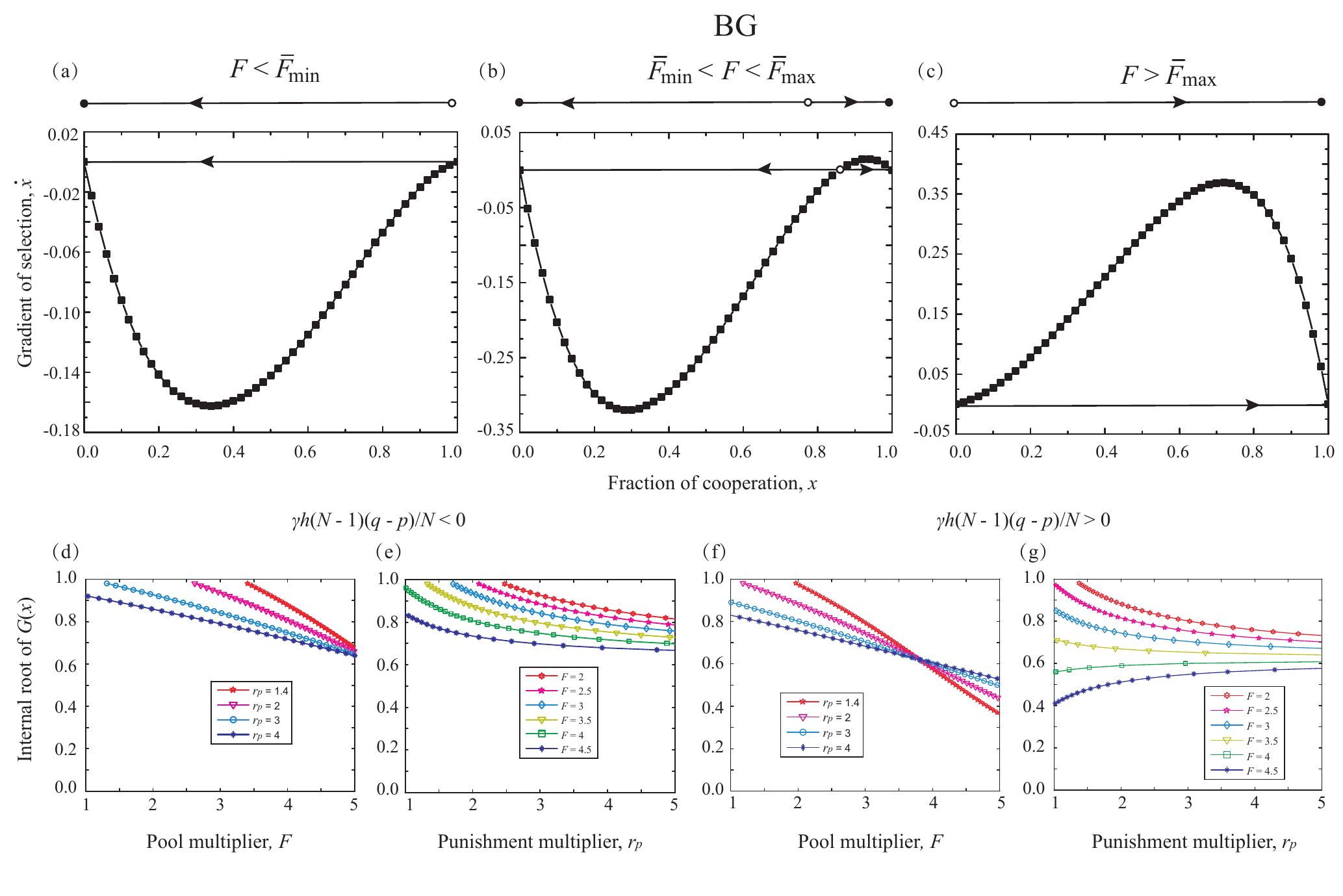}
\caption{Evolutionary dynamics of cooperation in BG. Panels (a)-(c) show the gradient of selection changing with the fraction of cooperators. Panels (d) and (e) respectively show the internal roots of $G(x)$ as functions of $F$ and $r_{p}$ when $\gamma h(N-1)(q-p)/N<0$. Panels (f) and (g) respectively show the internal roots of $G(x)$ as functions of $F$ and $r_{p}$ when $\gamma h(N-1)(q-p)/N>0$. Parameter values: $N=5, F=1.5, c=1, b=12, \tau=1, \alpha=0.6, h=1, \gamma=0.6, p=0.3, q=0.8, \beta=0.2,$ and $r_{p}=1.4$ in panel (a); $N=5, c=1, b=12, F=2, r_{p}=4, \tau=1, \alpha=0.6, h=1, \gamma=0.6, p=0.6, q=0.5,$ and $\beta=0.2$ in panel (b); $N=5, F=4, c=1, b=12, \tau=1, \alpha=0.15, \beta=0.2, h=1, \gamma=0.6, p=0.3, q=0.8,$ and $r_{p}=4$ in panel (c); $N=5, c=1, b=12, \tau=1, \alpha=0.6, \beta=0.2, h=1, \gamma=0.6, p=0.6,$ and $q=0.5$ in panels (d) and (e); $N=5, c=1, b=12, \tau=1, \alpha=0.6, \beta=0.2, h=1, \gamma=0.6, p=0.3,$ and $q=0.8$ in panels (f) and (g).}
\label{fig2}
\end{figure}

\begin{thm}
Let $\bar{F}_{\min}=\frac{Nc-(N-1)\gamma h(q-p)-N\beta\tau r_{p}(1-2\alpha)-N(N-1)r_{p}\beta(1-\alpha)\tau}{c}$ and $\bar{F}_{\max}=\frac{Nc-(N-1)\gamma h(q-p)-N\beta\tau r_{p}+N\beta\alpha\tau r_{p}(N+1)}{c}$. Thus we have the following three conclusions:\\
(1) If $F< \bar{F}_{\min},$ the system equation (\ref{re}) does not have interior equilibrium point for $x\in (0, 1)$. Then the boundary equilibrium point $x=0$ is stable, while another boundary equilibrium point $x=1$ is unstable.\\
(2) If $\bar{F}_{\min}<F<\bar{F}_{\max},$ the system equation (\ref{re}) has one interior equilibrium point $x^{*}$, which is unstable. The existing two boundary equilibrium points ($x=1$ and $x=0$) are stable.\\
(3) If $F> \bar{F}_{\max},$ the system equation (\ref{re}) has no interior equilibrium point in $(0, 1)$. Only the equilibrium point $x=1$ is stable, while the equilibrium point $x=0$ is unstable.
\end{thm}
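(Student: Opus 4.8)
The plan is to mirror, essentially verbatim, the argument used for the IPGG theorem, the only new ingredient being the bookkeeping of the bribery terms. First I would form $Q(x)=f_{C}-f_{D}$ directly from the two BG expressions for $f_{C}$ and $f_{D}$ just derived. The crucial observation is that the redistributed-bribe contribution $\frac{1}{N}[p\gamma h(N-1)x+q\gamma h(N-1)(1-x)]$ is \emph{identical} in $f_{C}$ and $f_{D}$, so it cancels; what remains of the bribery part is only the constant $-\frac{N-1}{N}p\gamma h+\frac{N-1}{N}q\gamma h=\frac{(N-1)\gamma h(q-p)}{N}$. Using the same identities $1-x^{N-1}=(1-x)\sum_{k=0}^{N-2}x^{k}$ and $(1-x)-(1-x)^{N}=x\sum_{k=0}^{N-2}(1-x)^{k+1}$ as before, this yields
\begin{align*}
Q(x) &= \frac{Fc}{N}-c-2\beta\alpha\tau r_{p}+\beta\tau r_{p}+\frac{(N-1)\gamma h(q-p)}{N}\\
&\quad -\beta\alpha\tau r_{p}\sum_{k=0}^{N-2}(1-x)^{k+1}+r_{p}\beta(1-\alpha)\tau\sum_{k=0}^{N-2}x^{k+1},
\end{align*}
i.e.\ exactly the IPGG expression (\ref{eq9}) shifted by the constant $\frac{(N-1)\gamma h(q-p)}{N}$.

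Since that shift does not depend on $x$, the derivative $\frac{\partial Q(x)}{\partial x}$ is unchanged from (\ref{eq10}), so $Q$ is again strictly increasing on $(0,1)$ whenever $\beta\tau r_{p}\neq 0$. I would then evaluate the endpoints, obtaining $Q(0)=\frac{Fc}{N}-c+\beta\tau r_{p}-\beta\alpha\tau r_{p}(N+1)+\frac{(N-1)\gamma h(q-p)}{N}$ and $Q(1)=\frac{Fc}{N}-c+\beta\tau r_{p}-2\beta\alpha\tau r_{p}+\beta(1-\alpha)\tau r_{p}(N-1)+\frac{(N-1)\gamma h(q-p)}{N}$. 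Solving $Q(0)=0$ for $F$ returns precisely $\bar{F}_{\max}$, and solving $Q(1)=0$ returns precisely $\bar{F}_{\min}$; because $Q$ is affine and increasing in $F$ (coefficient $c/N>0$) one has $Q(0)<0\iff F<\bar{F}_{\max}$ and $Q(1)<0\iff F<\bar{F}_{\min}$, and because $Q$ is increasing in $x$ we have $Q(0)\le Q(1)$ for every fixed $F$, hence $\bar{F}_{\min}\le \bar{F}_{\max}$, so case (2) is non-vacuous.

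The three conclusions then follow exactly as in parts (1)--(3) of the preceding proof, via $\frac{\partial G(x)}{\partial x}\big|_{x=0}=Q(0)$ and $\frac{\partial G(x)}{\partial x}\big|_{x=1}=-Q(1)$: if $F<\bar{F}_{\min}$ then $Q<0$ on $[0,1]$, there is no interior root, $x=0$ is stable and $x=1$ unstable; if $\bar{F}_{\min}<F<\bar{F}_{\max}$ then $Q$ is negative at $0$, positive at $1$ and monotone, giving a unique interior zero $x^{*}$, which is unstable while both boundary equilibria are stable; if $F>\bar{F}_{\max}$ then $Q>0$ on $[0,1]$, there is no interior root, $x=1$ is stable and $x=0$ unstable. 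I do not anticipate a real obstacle here, since the computation is a constant-shift of the IPGG analysis; the only point requiring care is correctly tracking the sign of the $(q-p)$ term when it is transposed across the equation to read off $\bar{F}_{\min}$ and $\bar{F}_{\max}$, and confirming $\bar{F}_{\min}\le\bar{F}_{\max}$ so that the intermediate regime is genuinely present.
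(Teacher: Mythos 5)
Your proposal is correct and follows essentially the same route as the paper's proof: form $Q(x)$, observe that the bribery terms contribute only the constant shift $\frac{(N-1)\gamma h(q-p)}{N}$ relative to the IPGG case, use monotonicity of $Q$ in $x$, and read off the three regimes from the signs of $Q(0)$ and $Q(1)$ (which indeed correspond to $\bar{F}_{\max}$ and $\bar{F}_{\min}$). Your extra remark that $Q(0)<Q(1)$ forces $\bar{F}_{\min}<\bar{F}_{\max}$, so the bistable regime is non-vacuous, is a small but welcome addition the paper leaves implicit.
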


\begin{proof}
In this case, the difference between $f_{C}$ and $f_{D}$ can be calculated as
\begin{eqnarray}\label{eq13}
Q(x)&=&\frac{Fc}{N}-c+\frac{N-1}{N}\gamma h(q-p)-2\beta\alpha\tau r_{p}+\beta\tau r_{p}-\beta\alpha\tau r_{p}\sum_{k=0}^{N-2}(1-x)^{k+1}\nonumber\\
&+&r_{p}\beta(1-\alpha)\tau\sum_{k=0}^{N-2}x^{k+1}.
\end{eqnarray}
Considering that $\frac{\partial Q(x)}{\partial x}>0$, we know that $Q(x)$ is strictly increasing for all $x\in (0,1)$. Accordingly, we know
\begin{eqnarray*}\label{eq11}
\frac{\partial G(x)}{\partial x}|_{x=0}&=&Q(0)\nonumber\\
&=&\frac{Fc}{N}-c+\frac{N-1}{N}\gamma h(q-p)+\beta\tau r_{p}-\beta\alpha\tau r_{p}(N+1),\\
-\frac{\partial G(x)}{\partial x}|_{x=1}&=&Q(1)\nonumber\\
&=&\frac{Fc}{N}-c+\frac{N-1}{N}\gamma h(q-p)+\beta\tau r_{p}-2\beta\alpha\tau r_{p}\nonumber\\
&+&\beta(1-\alpha)\tau r_{p}(N-1).
\end{eqnarray*}
We can get the following three conclusions:\\
(1) If $Q(1)<0$, that is, $F< \bar{F}_{\min}$, the system equation (\ref{re}) has no interior equilibrium point for $x\in (0, 1)$. Since $\frac{\partial G(x)}{\partial x}|_{x=1}>0$ and $\frac{\partial G(x)}{\partial x}|_{x=0}<0$, thus we know that $x=0$ is stable, while $x=1$ is unstable.\\
(2) If $Q(0)<0<Q(1)$, that is, $\bar{F}_{\min}< F< \bar{F}_{\max}$, the system equation (\ref{re}) has one interior equilibrium point $x=x^{*}$. Considering that $\frac{\partial G(x)}{\partial x}|_{x=x^{*}}>0$, we know that $x=x^{*}$ is unstable. Since $\frac{\partial G(x)}{\partial x}|_{x=1}<0$ and $\frac{\partial G(x)}{\partial x}|_{x=0}<0$, we know that the existing two boundary equilibrium points are stable.\\
(3) If $Q(0)>0$, that is, $F>\bar{F}_{\max}$, the system equation (\ref{re}) does not have interior equilibrium point for $x\in (0, 1)$. Since $\frac{\partial G(x)}{\partial x}|_{x=1}<0$ and $\frac{\partial G(x)}{\partial x}|_{x=0}>0$, we know that $x=1$ is stable, while $x=0$ is unstable.
\end{proof}

\begin{figure}[H]
\centering
\includegraphics[width=1\linewidth]{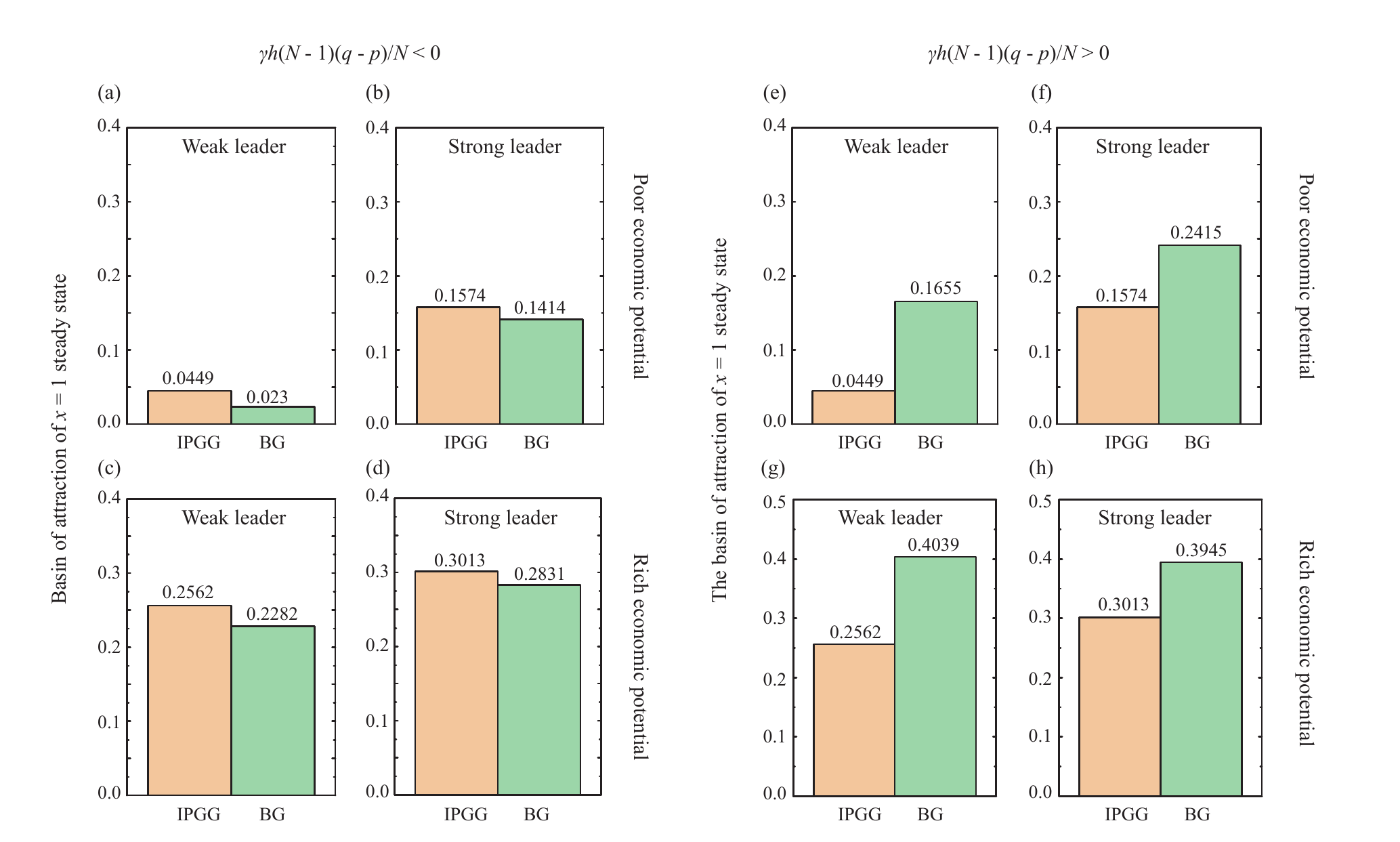}
\caption{Evolutionary dynamics of cooperation for different values of pool multiplier $F$ and punishment multiplier $r_{p}$ in IPGG and BG under two different scenarios.
Panels (a)-(d) show the basin of attraction of full cooperation state for different values of $F$ and $r_{p}$ when $\gamma h(N-1)(q-p)/N<0$. Panels (e)-(h) show the evolutionary outcomes when $\gamma h(N-1)(q-p)/N>0$. Parameter values: $N=5, c=1, b=12, \tau=1, p=0.6, q=0.5, \alpha=0.6, h=1, \gamma=0.6, F=2, \beta=0.2$, $r_{p}=2.5$ in panel (a), $r_{p}=4$ in panel (b); $N=5, c=1, b=12, \tau=1, \alpha=0.6, h=1, \gamma=0.6, p=0.6, q=0.5, F=4, \beta=0.2$, $r_{p}=2.5$ in panel (c), $r_{p}=4$ in panel (d); $N=5, c=1, b=12, \tau=1, \alpha=0.6, h=1, \gamma=0.6, p=0.3, q=0.8, F=2, \beta=0.2$, $r_{p}=2.5$ in panel (e), $r_{p}=4$ in panel (f); $N=5, c=1, b=12, \tau=1, \alpha=0.6, h=1, \gamma=0.6, p=0.3, q=0.8, F=4, \beta=0.2$, $r_{p}=2.5$ in panel (g), $r_{p}=4$ in panel (h).}
\label{fig3}
\end{figure}

In Fig.~\ref{fig2}, we provide three representative dynamical outcomes, namely, defection dominance (Fig.~\ref{fig2} (a)), bistable state of cooperation and defection (Fig.~\ref{fig2} (b)), and cooperation dominance (Fig.~\ref{fig2} (c)) when the model parameter value respectively satisfy $F<\bar{F}_{\min}, \bar{F}_{\min}<F<\bar{F}_{\max}$, and $F>\bar{F}_{\max}$. Besides, we show how the internal root of the gradient of selection varies with the model parameters $r_{p}$ and $F$ when $\gamma h(N-1)(q-p)/N<0$. As shown in Fig.~\ref{fig2}(d) and (e), for fixed $r_{p}$, the value of the existing internal root of $G(x)$ gradually decreases with the increase of $F$, which means that increasing $F$ can expand the basin of attraction of full cooperation state ($x=1$);
for fixed $F$, the increase of $r_{p}$ will enlarge the advantage of cooperators. Furthermore, Fig.~\ref{fig2}(f) and (g) show the evolutionary outcomes when $\gamma h(N-1)(q-p)/N>0$. Concretely, for fixed $r_{p}$, increasing $F$ will expand the basin of attraction of full cooperation steady state.
In addition, we notice that the larger the value of $r_{p}$, the slower the value of the internal root $x^{*}$ decreases with the increase of $F$ (Fig.~\ref{fig2}(f)). Fig.~\ref{fig2}(g) shows that with the increase of $r_{p}$ the internal root of $G(x)$ monotonically decreases when $F$ is not large. While when the value of $F$ is large, the value of the interior equilibrium point will increase with the increase of $r_{p}$.

In what follows, we respectively show how the basin of attraction of the full cooperation steady state varies for two different $r_{p}$ values when the economic potential is poor or rich under two different scenarios (Fig.~\ref{fig3}). Concretely, when $\gamma h(N-1)(q-p)/N<0$, we show that corruption will reduce the advantage of cooperation, and a stronger leader and stronger economic potential will both help to enhance the level of cooperation (see Fig.~\ref{fig3}(a)-(d)). While if $\gamma h(N-1)(q-p)/N>0$, a rich economic potential environment is more conducive to the evolution of cooperation than a poor economic potential environment. More interestingly, when the economic potential is poor, stronger leaders can be more favorable to expand the basin of attraction of full cooperation than a weak leader (see Fig.~\ref{fig3}(e) and (f)). While when the economic potential is strong, weaker leaders can be more beneficial to expand the advantage of cooperation in BG (see Fig.~\ref{fig3}(g) and (h)).

Finally, it is worth pointing out that we just set $N = 5$ in
Figs.\ref{fig1}-\ref{fig3} following previous related works \cite{huang_jtb_18,Liu_MMMAS_19,Liu_IJBC_21}, so that it will
be more conducive to make the comparison between our results
and those in Refs. \cite{huang_jtb_18,Liu_MMMAS_19,Liu_IJBC_21} where $N$ is also fixed at 5. Indeed
the group size $N$ plays an important role in evolution of cooperation \cite{Chen_15sr,Carpenter_07GEB}. However, we find that our main
results remain valid if the value of $N$ is approximately changed.

\section{Conclusions}

In summary, we have constructed a mathematical model by introducing bribery into the IPGG in an infinite population, so that we can study the interconnections among corruption, economic potential, and leader's punitive power and their effects on the replicator dynamics of cooperation and defection. We have found that there exist three different evolutionary outcomes, namely, cooperation dominance, bistable state of cooperation and defection, and defection dominance, and further given the conditions for the emergence of the above three dynamic behaviors from a theoretical perspective in both IPGG and BG. Furthermore, we have found that when cooperators are more inclined to offer bribes than defectors, the existence of corruption will reduce the advantage of cooperation, and richer economic potential and stronger leaders are both better able to sustain public goods contributions. On the contrary, when defectors are more willing to offer bribes, some nontrivial results will appear: stronger leaders in poor economic potential environment increase the advantage of cooperation, while reduce it in a rich economic potential environment.

Previous behavioral experimental results show that the leader's punitive power and the economic potential are two important factors that may influence the effect of institutional punishment on public good provisioning \citep{Muthukrishna_nhb_17}. Concretely, Muthukrishna \emph{et al.} \citep{Muthukrishna_nhb_17} revealed that stronger leaders can better promote the evolution of cooperation when the economic potential is weak and bribery is not considered, but when bribery is allowed, strong leaders will reduce the level of contribution. Furthermore, the existing of corruption will lead to the reduction of public goods provisioning. Different from previous experimental observations, our theoretical model predicts that whether corruption reduces the public goods contribution depends on the bribery tendency of cooperators and defectors, and whether strong leaders can promote cooperation depends on the economic potential. Our work may unveil more quantitative effects of interconnections among corruption, institutional punishment, and economic factors for the evolution of cooperation from theoretical perspective.

In the future work, we can thus consider different types of institutional incentive strategies such as institutional reward and the combination of punishment and reward. These incentives have been proved to be effective in solving social dilemmas \cite{Sun_21iscience,Gois19sr} and regulating technology safety \cite{HAN22TS,Pereira20JAIR}. Thus it is worth studying the interplay among different institutional incentives, corruption, and the evolution of cooperation, and further revealing how these incentives might influence bribery decision making. Furthermore, it could be interesting to explore whether these incentive strategies can promote the evolution of cooperation better than institutional punishment.

\section*{Acknowledgments}
This research was supported by the National Natural Science Foundation of China (Grants Nos. 61976048 and 62036002) and the Fundamental Research Funds of the Central Universities of China. L.L. acknowledges the support from Special Project of Scientific and Technological Innovation (Grant No. 2452021004).

\section*{Declaration of interest}
None


\begin{thebibliography}{00}


\bibitem{Perc_PR_17}
M. Perc, J. J. Jordan, D. G. Rand, Z. Wang, S. Boccaletti, A. Szolnoki, Statistical physics of human cooperation. Phys. Rep. 687 (2017) 1-51.

\bibitem{Perc_bio_10}
M. Perc, A. Szolnoki, Coevolutionary games-a mini review. BioSystems, 99 (2010) 109-125.

\bibitem{wangbmb16}
M. Wang, Y. Huang, Z. Wu, Simulation of yeast cooperation in 2D. B. Math. Biol. 78 (2016) 531-555.

\bibitem{wang2020epl}
X. Wang, F. Fu, Eco-evolutionary dynamics with environmental feedback: Cooperation in a changing world. EPL 132 (2020) 10001.

\bibitem{Kennedy_05}
D. Kennedy, C. Norman, What don't we know. Science 309 (2005) 75.

\bibitem{Axelrod1984}
R. Axelrod, W. D. Hamilton, The evolution of cooperation. Science 211 (1981) 1390-1396.

\bibitem{han2015}
T. A. Han, C. Perret, S. T. Powers, When to (or not to) trust intelligent machines: Insights from an evolutionary game theory analysis of trust in repeated games. Cogn. Syst. Res. 68 (2021) 111-124.

\bibitem{Szolnoki2022csf}
A. Szolnoki, X. Chen, Tactical cooperation of defectors in a multi-stage public goods game. Chaos, Solitons Fract. 155 (2022) 111696.

\bibitem{Tanimoto2017}
J. Tanimoto, How does resolution of strategy affect network reciprocity in spatial prisoner's dilemma games? Appl. Math. Comput. 301 (2017), 36-42.

\bibitem{WANG18AMC}
Q. Wang, N. He, X. Chen, Replicator dynamics for public goods game with resource allocation in large populations. Appl. Math. Comput. 328 (2018) 162-170.

\bibitem{zhangis_21}
C. Zhang, S. Liu, Z. Wang, F. J. Weissing, J. Zhang, The ``self-bad, partner-worse" strategy inhibits cooperation in networked populations. Inf. Sci. 585 (2022) 58-69.

\bibitem{Hofbauer1990}
J. Hofbauer, K. Sigmund, Adaptive dynamics and evolutionary stability. Appl. Math. Lett. 3 (1990) 75-79.

\bibitem{Su2019}
Q. Su, A. McAvoy, L. Wang, M. A. Nowak, Evolutionary dynamics with game transitions. Proc. Natl. Acad. Sci. USA 116 (2019), 25398-25404.


\bibitem{Szolnoki2014interface}
A. Szolnoki, M. Mobilia, L. L. Jiang, B. Szczesny, A. M. Rucklidge, M. Perc, Cyclic dominance in evolutionary games: a review. J. R. Soc. Interface 11 (2014) 20140735.

\bibitem{fu2010jtb}
F. Fu, X. Chen, Leveraging statistical physics to improve understanding of cooperation in multiplex networks. New J. Phys. 19 (2017) 071002.

\bibitem{Szolnoki2017pre}
A. Szolnoki, X. Chen, Alliance formation with exclusion in the spatial public goods game. Phys. Rev. E 95 (2017) 052316.

\bibitem{nowak2006}
M. A. Nowak, Five rules for the evolution of cooperation. Science 314 (2006) 1560-1563.

\bibitem{Tanimoto2018}
H. Ito, J. Tanimoto, Scaling the phase-planes of social dilemma strengths shows game-class changes in the five rules governing the evolution of cooperation. Roy. Soc. Open Sci. (2018) 181085.

\bibitem{Szolnoki2012pre}
A. Szolnoki, M. Perc, Conditional strategies and the evolution of cooperation in spatial public goods games. Phys. Rev. E 85 (2012) 026104.

\bibitem{Kun2013}
\'{A}. Kun, U. Dieckmann, Resource heterogeneity can facilitate cooperation. Nat. Commun. 4 (2013) 2453.


\bibitem{Szolnoki20amc}
A. Szolnoki, X. Chen, Cooperation and competition between pair and multi-player social games in spatial populations. Sci. Rep. 11 (2021) 12101.

\bibitem{han18sr}
T. A. Han, L. Tran-Thanh, Cost-effective external interference for promoting the evolution of cooperation. Sci. Rep. 8 (2018) 15997.

\bibitem{Duong2021PRSA}
M. H. Duong, T. A. Han, Cost efficiency of institutional incentives for promoting cooperation in finite populations. Proc. R. Soc. A 477(2021) 20210568.


\bibitem{xia2014amc}
Q. Jian, X. Li, J. Wang, C. Xia, Impact of reputation assortment on tag-mediated altruistic behaviors in the spatial lattice. Appl. Math. Comput. 396 (2021) 125928.

\bibitem{he2019}
N. He, X. Chen, A. Szolnoki, Central governance based on monitoring and reporting solves the collective-risk social dilemma. Appl. Math. Comput. 347 (2019) 334-341.

\bibitem{Szolnoki2017}
A. Szolnoki, M. Perc, Second-order free-riding on antisocial punishment restores the effectiveness of prosocial punishment. Phys. Rev. X, 7 (2017) 041027.

\bibitem{Fupre2008}
F. Fu, C. Hauert, M. A. Nowak, L. Wang, Reputation-based partner choice promotes cooperation in social networks. Phys. Rev. E 78 (2008), 026117.

\bibitem{Szolnoki2012}
A. Szolnoki, M. Perc, G. Szabó, Defense mechanisms of empathetic players in the spatial ultimatum game. Phys. Rev. Lett. 109 (2012) 078701.

\bibitem{yang2017csf}
H. X. Yang, L. Tian, Enhancement of cooperation through conformity-driven reproductive ability. Chaos Solitons Fract. 103 (2017) 159-162.
\bibitem{yang2019pa}
H. X. Yang, J. Yang, Reputation-based investment strategy promotes cooperation in public goods games. Physica A 523 (2019) 886-893.

\bibitem{quan2021}
J. Quan, M. Zhang, X. Wang, Effects of synergy and discounting on cooperation in spatial public goods games. Phys. Lett. A 388 (2021) 127055.


\bibitem{Vasconcelos2015}
V. V. Vasconcelos, F. C. Santos, J. M. Pacheco, Cooperation dynamics of polycentric climate governance. Math. Models Methods Appl. Sci. 25 (2015) 2503-2517.

\bibitem{zhang2017}
J. Quan, W. Yang, X. Li, X. Wang, J. B. Yang, Social exclusion with dynamic cost on the evolution of cooperation in spatial public goods games. Appl. Math. Comput. 372 (2020)124994.


\bibitem{Okada2015}
I. Okada, H. Yamamoto, F. Toriumi, T. Sasaki, The effect of incentives and meta-incentives on the evolution of cooperation. PLOS Comput. Biol. 11 (2015) e1004232.


\bibitem{quan2019}
P. Zhu, H. Guo, H. Zhang, Y. Han, Z. Wang, C. Chu, The role of punishment in the spatial public goods game. Nonlinear Dynam. 102 (2020) 2959-2968.


\bibitem{wang2019}
S. Wang, X. Chen, A. Szolnoki, Exploring optimal institutional incentives for public cooperation. Commun. Nonlinear Sci. Num. Simulat. 79 (2019) 104914.

\bibitem{chen2015pre}
X. Chen, A. Szolnoki, M. Perc, Competition and cooperation among different punishing strategies in the spatial public goods game. Phys. Rev. E 92 (2015) 012819.

\bibitem{perc2012sr}
M. Perc, Sustainable institutionalized punishment requires elimination of second-order free-riders. Sci. Rep. 2 (2012) 344.

\bibitem{yang2020epl}
H. X. Yang, M. J. Fu, A punishment mechanism in the spatial public goods game with continuous strategies. EPL 132 (2020) 10007.

\bibitem{zhang2021amc}
B. Zhang, X. An, Y. Dong, Conditional cooperator enhances institutional punishment in public goods game. Appl. Math. Comput. 390 (2021) 125600.

\bibitem{perc2012njp}
M. Perc, A. Szolnoki, Self-organization of punishment in structured populations. New J. Phys. 14 (2012) 043013.

\bibitem{Han2021}
T. A. Han, L. M. Pereira, T. Lenaerts, F. C. Santos, Mediating artificial intelligence developments through negative and positive incentives. PLOS ONE 16 (2021) e0244592.

\bibitem{sigmund2010nature}
C. Hauert, A. Traulsen, H. D. S. née Brandt, M. A. Nowak, K. Sigmund, Public goods with punishment and abstaining in finite and infinite populations. Biol. Theory 3(2008), 114-122.



\bibitem{chen2018pcb}
T. Fu, Y. Ji, K. Kamei, L. Putterman, Punishment can support cooperation even when punishable. Econ. Lett. 154 (2017) 84-87.



\bibitem{quan2020epl}
J. Quan, X. Yang, X. Wang, J. B. Yang, K. Wu, Z. Dai, Withhold-judgment and punishment promote cooperation in indirect reciprocity under incomplete information. EPL 128 (2020) 28001.

\bibitem{Abdallah_interface_14}
S. Abdallah, R. Sayed, I. Rahwan, B. L. LeVeck, M. Cebrian, A. Rutherford, J. H. Fowler, Corruption drives the emergence of civil society. J. R. Soc. Interface 11 (2014) 20131044.

\bibitem{lee_jtb_15}
J. H. Lee, K. Sigmund, U. Dieckmann, Y. Iwasa, Games of corruption: How to suppress illegal logging. J. Theor. Biol. 367 (2015) 1-13.


\bibitem{Lee_PNAS_19}
J. H. Lee, Y. Iwasa, U. Dieckmann, K. Sigmund, Social evolution leads to persistent corruption. Proc. Natl. Acad. Sci. USA 116  (2019) 13276-13281.

\bibitem{huang_jtb_18}
F. Huang, X. Chen, L. Wang, Evolution of cooperation in a hierarchical society with corruption control. J. Theor. Biol. 449 (2018) 60-72.

\bibitem{Liu_MMMAS_19}
L. Liu, X. Chen, A. Szolnoki, Evolutionary dynamics of cooperation in a population with probabilistic corrupt enforcers and violators. Math. Mod. Methods Appl. Sci. 29 (2019) 2127-2149.

\bibitem{Liu_IJBC_21}
L. Liu, X. Chen, Evolutionary dynamics of cooperation in a corrupt society with anti-corruption control. Int. J. Bifurcat. Chaos 31 (2021) 2150039.

\bibitem{Muthukrishna_nhb_17}
M. Muthukrishna, P. Francois, S. Pourahmadi, J. Henrich, Corrupting cooperation and how anti-corruption strategies may backfire. Nat. Hum. Behav. 1 (2017) 0183.

\bibitem{Schuster_83}
P. Schuster, K. Sigmund, Replicator dynamics. J. Theor. Biol. 100 (1983) 533-538.

\bibitem{Hofbauer_98}
J. Hofbauer, K. Sigmund, \emph{Evolutionary Games and Population Dynamics}. Cambridge UniversityPress, Cambridge, UK, (1998).

\bibitem{Chen_15sr}
X. Chen, T. Sasaki, M. Perc, Evolution of public cooperation in a monitored society with implicated punishment and within-group enforcement. Sci. Rep. 5 (2015) 17050.
\bibitem{Carpenter_07GEB}
J. P. Carpenter, Punishing free-riders: how group size affects mutual monitoring and provision of public goods. Games Econ.
Behav. 60 (2007) 30-51.
\bibitem{Sun_21iscience}
W. Sun, L. Liu, X. Chen, A. Szolnoki, V. V. Vasconcelos, Combination of institutional incentives for cooperative governance of risky commons. iScience 24 (2021) 102844.
\bibitem{Gois19sr}
A. R. Góis, F. P. Santos, J. M. Pacheco, F. C. Santos, Reward and punishment in climate change dilemmas. Sci. Rep. 9 (2019) 16193.
\bibitem{HAN22TS}
T. A. Han, T. Lenaerts, F. C. Santos, L. M. Pereira, Voluntary safety commitments provide an escape from over-regulation in AI development. Technol. Soc. 68 (2022) 101843.
\bibitem{Pereira20JAIR}
L. M. Pereira, F. C. Santos, T. Lenaerts, To regulate or not: A social dynamics analysis of an idealised ai race. J. Artif. Intell. Res. 69 (2020) 881-921.


\end{thebibliography}
\end{document}